\newtheorem{thm}{Theorem}
\newtheorem{prop}{Proposition}
\newtheorem{defn}{Definition}
\newtheorem{rem}{Remark}
\newcommand{\abs}[1]{\left\vert#1\right\vert}
\newcommand{\To}{\rightarrow}
\newcommand{\calP}{\mathcal{P}}
\newcommand{\bsx}{\boldsymbol{x}}
\newcommand{\bsz}{\boldsymbol{z}}
\newcommand{\bse}{\boldsymbol{e}}
\newcommand{\bsr}{\boldsymbol{r}}
\newcommand{\bsc}{\boldsymbol{c}}
\newcommand{\icomp}{\mathtt{i}}
\newcommand{\Field}{\mathbb{F}}
\newcommand{\NN}{\mathbb{N}}
\newcommand{\ZZ}{\mathbb{Z}}
\newcommand{\CC}{\mathbb{C}}
\newcommand{\FF}{\mathbb{F}}
\newcommand{\oa}{{\rm OA}}
\newcommand{\ooa}{{\rm OOA}}
\newcommand{\prof}{{\rm PROFILE}}
\newcommand{\hei}{{\rm HEIGHT}}
\newcommand{\rdots}{\mathinner{\mkern1mu\lower-1\p@\vbox{\kern7\p@\hbox{.}}
\mkern2mu \raise4\p@\hbox{.}\mkern2mu\raise7\p@\hbox{.}\mkern1mu}}
\date{\today}
\begin{document}

\title{Mixed orthogonal arrays, $(u,m,\bse,s)$-nets, and $(u,\bse,s)$-sequences}

\author{Peter Kritzer\thanks{P. Kritzer is supported by the Austrian Science Fund (FWF)
Project F5506-N26, which is a part of the Special Research Program
"Quasi-Monte Carlo Methods: Theory and Applications".} and Harald
Niederreiter}

\maketitle

\begin{abstract}
We study the classes of $(u,m,\bse,s)$-nets and $(u,\bse,s)$-sequences, which 
are generalizations of $(u,m,s)$-nets and $(u,s)$-sequences, respectively.  
We show equivalence results that link the existence of $(u,m,\bse,s)$-nets and so-called 
mixed (ordered) orthogonal arrays, thereby generalizing earlier results by Lawrence, and Mullen and Schmid. 
We use this combinatorial equivalence principle to obtain new results on the possible parameter 
configurations of $(u,m,\bse,s)$-nets and $(u,\bse,s)$-sequences, which generalize in particular a result of Martin and Stinson. 
\end{abstract}

\noindent\textbf{Keywords:} $(u,m,\bse,s)$-net, $(u,\bse,s)$-sequence, orthogonal array, ordered orthogonal array, mixed orthogonal array.

\noindent\textbf{2010 MSC:} 05B15, 11K06, 11K38.


\section{Introduction and basic definitions} \label{secintr}

The construction of point sets and sequences with good
equidistribution properties is a classical problem in number
theory and has important applications to quasi-Monte Carlo methods
in numerical analysis (see the books of Dick and
Pillichshammer~\cite{DP10}, Leobacher and
Pillichshammer~\cite{LP14}, and Niederreiter~\cite{N92}). The
standard setting is that of the $s$-dimensional unit cube
$[0,1]^s$, for a given dimension $s \ge 1$, from which the points
are taken. While the problem of constructing evenly distributed points in $[0,1]^s$ is of number-theoretic origin, it also has a strong combinatorial 
flavor (see \cite[Chapter~6]{DP10} and \cite[Chapter~15]{LM98}).

Powerful methods for the construction of finite point sets with
good equidistribution properties in $[0,1]^s$ are based on the
theory of nets (see again the references above as well as the
original paper~\cite{N87} and the recent handbook
article~\cite{N13}). This theory was recently extended by
Tezuka~\cite{T13} and studied in a slightly modified form by
Hofer~\cite{H15}, Hofer and Niederreiter~\cite{HN13}, Kritzer and
Niederreiter~\cite{KN14}, and Niederreiter and Yeo~\cite{NY13}.
The underlying idea of these nets is to guarantee perfect
equidistribution of the points for certain subintervals of the
half-open unit cube $[0,1)^s$. Concretely, for a dimension $s \ge
1$ and an integer $b \ge 2$, an interval $J \subseteq [0,1)^s$ is
called an \emph{elementary interval in base} $b$ if it is of the
form
\begin{equation} \label{eqelint}
J=\prod_{i=1}^s [a_ib^{-d_i},(a_i+1)b^{-d_i})
\end{equation}
with integers $d_i \ge 0$ and $0 \le a_i < b^{d_i}$ for $1 \le i
\le s$. These intervals play a crucial role in the subsequent
definition of a $(u,m,\bse,s)$-net, which we state below. Here and
in the following, we denote by $\NN$ the set of positive integers
and by $\lambda_s$ the $s$-dimensional Lebesgue measure.

\begin{defn} \label{defnet} {\rm
Let $b \ge 2$, $s \ge 1$, and $0 \le u \le m$ be integers and let
$\bse =(e_1,\ldots,e_s) \in \NN^s$. A point set $\calP$ of $b^m$
points in $[0,1)^s$ is a $(u,m,\bse,s)$-\emph{net in base} $b$ if
every elementary interval $J \subseteq [0,1)^s$ in base $b$ of
volume $\lambda_s(J) \ge b^{u-m}$ and of the form~\eqref{eqelint},
with integers $d_i \ge 0$, $0 \le a_i < b^{d_i}$, and $e_i | d_i$
for $1 \le i \le s$, contains exactly $b^m \lambda_s(J)$ points of
$\calP$. }
\end{defn}

Definition~\ref{defnet} is the definition of a $(u,m,\bse,s)$-net
in base $b$ in the sense of~\cite{HN13}. Previously,
Tezuka~\cite{T13} introduced a slightly more general definition
where the conditions on the number of points in the elementary
intervals need to hold only for those elementary intervals $J$ in
base $b$ with $\lambda_s(J)=b^{u-m}$. The narrower definition
in~\cite{HN13} guarantees, as stated in that paper, that every
$(u,m,\bse,s)$-net in base $b$ is also a $(v,m,\bse,s)$-net in
base $b$ for every integer $v$ with $u \le v \le m$. The latter
property is very useful when working with such point sets (see
again~\cite{HN13} for further details). Hence, whenever we speak
of a $(u,m,\bse,s)$-net here, we mean a $(u,m,\bse,s)$-net in the
narrower sense of Definition~\ref{defnet}.

Note that the points of a $(u,m,\bse,s)$-net tend to be very
evenly distributed if $u$ is small. But the choice of
$e_1,\ldots,e_s \in \NN$ also plays an important role since larger
values of the $e_i$ in general entail fewer restrictions in the
defining property of the net.

For infinite sequences of points in $[0,1]^s$ with good
equidistribution properties, the corresponding concept is that of
a $(u,\bse,s)$-sequence. As usual, we write $[\bsx]_{b,m}$ for the
coordinatewise $m$-digit truncation in base $b$ of $\bsx \in
[0,1]^s$ (compare with \cite[Remark 14.8.45]{N13} and
\cite[p.~194]{NX01}).

\begin{defn} \label{defseq} {\rm
Let $b \ge 2$, $s \ge 1$, and $u \ge 0$ be integers and let $\bse
\in \NN^s$. A sequence $\bsx_1,\bsx_2,\ldots$ of points in
$[0,1]^s$ is a $(u,\bse,s)$-\emph{sequence in base} $b$ if for all
integers $g \ge 0$ and $m > u$, the points $[\bsx_n]_{b,m}$ with
$gb^m < n \le (g+1)b^m$ form a $(u,m,\bse,s)$-net in base $b$. }
\end{defn}

Again, the points of a $(u,\bse,s)$-sequence are very evenly
distributed if $u$ is small, but also in this case the choice of
$\bse$ has an influence on the manner in which the points are
spread over the elementary intervals in the unit cube.

If we choose $\bse =(1,\ldots,1) \in \NN^s$ in
Definitions~\ref{defnet} and~\ref{defseq}, then these definitions
coincide with those of a classical $(u,m,s)$-net and a classical
$(u,s)$-sequence, respectively. The reasons why the more general
$(u,m,\bse,s)$-nets and $(u,\bse,s)$-sequences were introduced
have to do with their applications to quasi-Monte Carlo methods.
Since this paper is devoted to the combinatorial aspects of
$(u,m,\bse,s)$-nets and $(u,\bse,s)$-sequences, we do not
elaborate on these reasons and we refer instead to
\cite[Section~1]{KN14} and~\cite{T13}.

It was shown by Lawrence~\cite{L96} and Mullen and
Schmid~\cite{MS96} that classical $(u,m,s)$-nets are
combinatorially equivalent to certain types of orthogonal arrays
(see also \cite[Section~6.2]{DP10} for an exposition of this
result). This equivalence has important implications for the
theory of $(u,m,s)$-nets and $(u,s)$-sequences (see
\cite[Chapter~6]{DP10} and~\cite{SS09}). The main result of the
present paper generalizes this equivalence to $(u,m,\bse,s)$-nets
(see Theorem~\ref{thmequiv}). The crucial step is to move from
orthogonal arrays to mixed orthogonal arrays in the sense of
\cite[Chapter~9]{HSS99}. We recall the definition of a mixed
orthogonal array $\oa \left(N,l_1^{k_1} \cdots l_v^{k_v},t\right)$
from \cite[Definition~9.1]{HSS99}, where we change the notation
from $s_i$ to $l_i$ since in our case $s$ stands for a dimension.
We write $R(b)=\{0,1,\ldots,b-1\} \subset \ZZ$ for every integer
$b \ge 2$.

\begin{defn} \label{defmoa} {\rm
Let $N \ge 1$, $l_1,\ldots,l_v \ge 2$, $k_1,\ldots,k_v \ge 1$, and
$0 \le t \le k := k_1 + \cdots + k_v$ be integers. A \emph{mixed
orthogonal array} $\oa \left(N,l_1^{k_1} \cdots
l_v^{k_v},t\right)$ is an array of size $N \times k$ in which the
first $k_1$ columns have symbols from $R(l_1)$, the next $k_2$
columns have symbols from $R(l_2)$, and so on, with the property
that in any $N \times t$ subarray every possible $t$-tuple occurs
an equal number of times as a row. }
\end{defn}

\begin{rem} \label{remmoa} {\rm
The parameter $t$ of a mixed orthogonal array is called its
\emph{strength}. Definition~\ref{defmoa} is vacuously satisfied
for $t=0$. As in \cite[Definition~9.1]{HSS99}, it is not required
that $l_1,\ldots,l_v$ be distinct. If $l_1= \cdots =l_v$, then
Definition~\ref{defmoa} reduces to that of an orthogonal array
(see \cite[Definition~1.1]{HSS99}). }
\end{rem}

Further results of this paper concern bounds on the parameters of
$(u,m,\bse,s)$-nets and $(u,\bse,s)$-sequences for the case of
greatest practical interest where $u=0$ (see Theorems~\ref{thmnetoa} to~\ref{thmkr}). 
Moreover, we show a necessary condition for the parameters 
of a mixed ordered orthogonal array (see Theorem~\ref{thmcalD}) which generalizes \cite[Lemma~3.1]{MS99}.

\section{Necessary conditions for $(0,m,\bse,s)$-nets} \label{secnecnet}

The parameter $u$ of a $(u,m,\bse,s)$-net is a nonnegative integer
and its optimal value is $u=0$. The following result imposes a
combinatorial obstruction on the existence of $(0,m,\bse,s)$-nets.
If $\bse =(e_1,\ldots,e_s) \in \NN^s$, then we can assume without
loss of generality that $e_1 \le e_2 \le \cdots \le e_s$.

\begin{thm} \label{thmnetoa}
Let $\bse =(e_1,\ldots,e_s) \in \NN^s$ with $e_1 \le e_2 \le
\cdots \le e_s$. For $2 \le t \le s$ and $m \ge e_{s-t+1} + \cdots
+ e_{s-1}+e_s$, the existence of a $(0,m,\bse,s)$-net in base $b$
implies the existence of a mixed orthogonal array $\oa
\left(b^m,l_1^1 \cdots l_s^1,t\right)$ with $l_i=b^{e_i}$ for $1
\le i \le s$.
\end{thm}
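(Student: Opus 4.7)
The plan is to build the mixed orthogonal array directly from the point set of the net and then verify the strength condition by re-expressing it as the equidistribution of elementary intervals.

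Given a $(0,m,\bse,s)$-net $\calP = \{\bsx_1,\ldots,\bsx_{b^m}\}$ in base $b$, with $\bsx_n = (x_n^{(1)},\ldots,x_n^{(s)})$, I would form a $b^m \times s$ array $A = (A_{n,i})$ by letting $A_{n,i} \in R(b^{e_i})$ be the integer whose base-$b$ representation consists of the first $e_i$ base-$b$ digits of $x_n^{(i)}$. Since $l_i = b^{e_i}$, the columns of $A$ then carry the correct symbol sets for the profile $l_1^1 \cdots l_s^1$ of Definition~\ref{defmoa}.

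To check strength $t$, I would fix an arbitrary $t$-subset $S = \{i_1,\ldots,i_t\} \subseteq \{1,\ldots,s\}$ and an arbitrary target tuple $(a_{i_1},\ldots,a_{i_t})$ with $0 \le a_{i_j} < b^{e_{i_j}}$. The rows of $A$ whose restriction to the columns indexed by $S$ equals this tuple are in bijection with the points of $\calP$ lying in
\[
J \;=\; \prod_{i \in S}\bigl[a_i b^{-e_i},(a_i+1)b^{-e_i}\bigr) \;\times\; \prod_{i \notin S}[0,1).
\]
This $J$ fits the form~\eqref{eqelint} with $d_i = e_i$ for $i \in S$ and $d_i = 0$ (so $a_i = 0$) for $i \notin S$; in particular $e_i \mid d_i$ for every $i$, so $J$ is an admissible elementary interval in Definition~\ref{defnet}.

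Finally I would invoke the net property. The volume is $\lambda_s(J) = b^{-\sigma_S}$ with $\sigma_S := \sum_{i \in S} e_i$. Since $|S| = t$ and $e_1 \le \cdots \le e_s$, the hypothesis yields $\sigma_S \le e_{s-t+1}+\cdots+e_s \le m$, hence $\lambda_s(J) \ge b^{0-m}$. Definition~\ref{defnet} (with $u = 0$) then forces $J$ to contain exactly $b^m \lambda_s(J) = b^{m-\sigma_S}$ points of $\calP$, and this count is the same for every choice of the target tuple. Thus every $t$-tuple appears equally often in the $S$-subarray of $A$, establishing that $A$ is an $\oa(b^m, l_1^1 \cdots l_s^1, t)$. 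The only nontrivial point is the volume bound, and that is precisely what dictates the hypothesis on $m$: the worst case $S = \{s-t+1,\ldots,s\}$ needs $m \ge e_{s-t+1}+\cdots+e_s$ in order for the corresponding interval $J$ to remain admissible.
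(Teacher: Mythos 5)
Your proposal is correct and follows essentially the same route as the paper: the array entries $A_{n,i}$ (the first $e_i$ digits of $x_n^{(i)}$) coincide with the paper's $z_i(n)=\lfloor b^{e_i}x_n^{(i)}\rfloor$, and the strength-$t$ verification via the elementary interval $J$ with $d_i=e_i$ on the chosen coordinates and the volume bound $\lambda_s(J)\ge b^{-m}$ is exactly the paper's argument. Your explicit check that $e_i \mid d_i$ (including $d_i=0$ off the chosen set) is a small clarification the paper leaves implicit, but there is no substantive difference.
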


\begin{proof}
Let $\calP$ be a $(0,m,\bse,s)$-net in base $b$ and let the points
of $\calP$ be
$$
\bsx_n=(x_n^{(1)},\ldots,x_n^{(s)})\in [0,1)^s \qquad \mbox{for }
n=1,\ldots,b^m.
$$
Furthermore, define
$$
z_i (n)=\lfloor b^{e_i} x_n^{(i)}\rfloor\in R(b^{e_i}) \qquad
\mbox{for } 1\le n\le b^m, \ 1\le i\le s.
$$
Arrange these integers into the $b^m \times s$ array
\begin{equation}\label{eqarray}
(z_i (n))_{1\le n\le b^m, \, 1\le i\le s}=\begin{pmatrix}
                                           z_1 (1) & z_2 (1) &\ldots & z_s (1)\\
                       z_1 (2) & z_2 (2) &\ldots & z_s (2)\\
                       \vdots &\vdots & &\vdots\\
                       z_1 (b^m) & z_2 (b^m) &\ldots & z_s (b^m)
                                          \end{pmatrix}.
\end{equation}
For $i=1,\ldots,s$, let $\bsz_i$ denote the $i$th column of the array in~\eqref{eqarray}.
Choose a strength $t$ with $2\le t\le s$ and assume that $m \ge
e_{s-t+1} + \cdots +e_{s-1}+e_s$, i.e., that $m$ is at least as
large as the sum of the $t$ largest $e_i$. Pick $1 \le i_1 <i_2 <
\cdots< i_t \le s$ and consider the corresponding columns
$\bsz_{i_1},\ldots,\bsz_{i_t}$. We have to show that $\bsz_{i_1},\ldots,\bsz_{i_t}$ are orthogonal in
the sense of Definition~\ref{defmoa}, namely that every possible
$t$-tuple occurs an equal number of times as a row in the $b^m \times t$ subarray formed by the columns
$\bsz_{i_1},\ldots,\bsz_{i_t}$. Take any $h_j\in R(b^{e_{i_j}})$
for $1\le j\le t$. For $1\le n\le b^m$ we have
$(z_{i_1}(n),\ldots,z_{i_t}(n))=(h_1,\ldots,h_t)$ if and only if
$z_{i_j}(n)=h_j$ for $1\le j\le t$, which is equivalent to
$\left\lfloor b^{e_{i_j}} x_n^{(i_j)}\right\rfloor= h_j$ for $1\le
j\le t$. The latter condition holds if and only if
$$
\bsx_n\in J:= \prod_{i=1}^s J_i,
$$
where
$$
J_i=
\begin{cases}
[h_jb^{-e_{i_j}},(h_j+1)b^{-e_{i_j}}) & \text{if } i=i_j \text{ for some } j \in \{1,\ldots,t\}, \\
[0,1) & \text{otherwise}.
\end{cases}
$$
Now $\lambda_s (J)=b^{-e_{i_1} -\cdots - e_{i_t}}\ge
b^{-e_{s-t+1}-\cdots - e_s}\ge b^{-m}$, and so $J$ is an
elementary interval in base $b$ to which the definition of a
$(0,m,\bse,s)$-net in base $b$ applies. Therefore the number of
integers $n$ with $1 \le n \le b^m$ such that
$(z_{i_1}(n),\ldots,z_{i_t}(n))=(h_1,\ldots,h_t)$ is given by
$$
A(J,\calP) = b^m \lambda_s(J) =b^m b^{-e_{i_1}-\cdots - e_{i_t}}
$$
for all $(h_1,\ldots, h_t)$, and the desired orthogonality
property is established.
\end{proof}

\begin{rem} \label{remlump} {\rm
 We can also combine $e_i$ that are equal, say we have $k_1$ of the $e_i$ equal to 1, $k_2$ of the $e_i$ equal to $2$, and so on up
to $k_v$ of the $e_i$ equal to $v$ with $\sum_{h=1}^v k_h=s$. Then
we obtain a mixed orthogonal array $\oa (b^m, b^{k_1}
(b^2)^{k_2}\cdots (b^v)^{k_v},t)$, where in $b^{k_1}(b^2)^{k_2}
\cdots (b^v)^{k_v}$ we delete the parts $(b^h)^{k_h}$ with
$k_h=0$. }
\end{rem}

In view of Theorem~\ref{thmnetoa}, we can apply the Rao bound for
mixed orthogonal arrays. This bound is given in~\cite[Theorem
9.4]{HSS99} and reads as follows in our notation (we again change
the $s_i$ to $l_i$ in comparison to \cite{HSS99}). The cases of
even and odd strength $t$ have to be distinguished. For binomial
coefficients, we use the standard convention ${k \choose r}=0$ for
$r > k$.

\begin{prop} \label{proprao}
The parameters of an $\oa (N, l_1^{k_1} \cdots l_v^{k_v},t)$,
where without loss of generality $l_1\le l_2\le\cdots\le l_v$,
satisfy
\begin{equation}\label{eqrao1}
N \ge \sum_{j=0}^g \sum_{I_j (v)} {k_1 \choose r_1}\cdots {k_v
\choose r_v} (l_1-1)^{r_1}\cdots (l_v-1)^{r_v}
\end{equation}
if $t=2g$, and
\begin{eqnarray}\label{eqrao2}
N &\ge& \sum_{j=0}^g \sum_{I_j (v)} {k_1 \choose r_1}\cdots {k_v \choose r_v} (l_1-1)^{r_1}\cdots (l_v-1)^{r_v}\nonumber\\
&&+\sum_{I_g (v)} {k_1 \choose r_1}\cdots {k_{v-1} \choose
r_{v-1}} {k_v -1 \choose r_v}(l_1-1)^{r_1}\cdots
(l_{v-1}-1)^{r_{v-1}} (l_v-1)^{r_v +1}
\end{eqnarray}
if $t=2g+1$, where
$$I_j (v) := \Big\{(r_1,\ldots,r_v)\in\NN_0^v: \sum_{i=1}^v r_i =j\Big\},$$
$\sum_{I_j (v)}$ denotes a sum over all $v$-tuples $(r_1,\ldots,r_v)$ in $I_j (v)$, and
$\NN_0$ is the set of nonnegative integers.
\end{prop}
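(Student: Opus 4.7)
The plan is to use a character-theoretic linear algebra argument. First I would associate to each row of the mixed orthogonal array an element of the finite abelian group $G=\prod_{i=1}^{v}(\ZZ/l_{i}\ZZ)^{k_{i}}$, yielding points $\bsy_{1},\ldots,\bsy_{N}\in G$. Each character of $G$ factors as $\chi=\prod_{i=1}^{v}\prod_{j=1}^{k_{i}}\chi_{i,j}$ with $\chi_{i,j}$ a character of the corresponding $\ZZ/l_{i}\ZZ$-factor, and I call the \emph{weight} of $\chi$ the number of pairs $(i,j)$ where $\chi_{i,j}$ is non-trivial. To each $\chi$ I attach the vector $\bigl(\chi(\bsy_{n})\bigr)_{n=1}^{N}\in\CC^{N}$; the bound on $N$ will come from exhibiting a large pairwise orthogonal family of such vectors.

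The strength-$t$ hypothesis then supplies the orthogonality. For a non-trivial $\chi$ supported on a column set $C$ with $|C|\le t$, the strength property implies that in the subarray on $C$ every tuple occurs $N/\prod_{(i,j)\in C}l_{i}$ times, so
\begin{equation*}
\sum_{n=1}^{N}\chi(\bsy_{n})=\frac{N}{\prod_{(i,j)\in C}l_{i}}\sum_{\bsv\in\prod_{(i,j)\in C}\ZZ/l_{i}\ZZ}\chi(\bsv)=0,
\end{equation*}
because a non-trivial character has zero sum over a finite abelian group. Hence whenever $\chi_{1}\neq\chi_{2}$ with $\chi_{1}\overline{\chi_{2}}$ of weight at most $t$, the corresponding vectors are orthogonal in $\CC^{N}$.

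For even strength $t=2g$, I would take the family of all characters of weight at most $g$: any two combine to a character of weight at most $2g=t$, so the family is pairwise orthogonal and hence linearly independent. Enumerating it by choosing, for each $i$, an $r_{i}$-element subset of the $k_{i}$ columns in group $i$ together with a non-trivial coordinate character on each (subject to $\sum_{i=1}^{v}r_{i}=j\le g$) reproduces exactly the right-hand side of \eqref{eqrao1}. For $t=2g+1$, I would enlarge the family by adjoining all weight-$(g+1)$ characters carrying a non-trivial component at one fixed distinguished column, which I place in the $v$-th group. Any two characters in the enlarged family still combine to a character of weight at most $2g+1=t$, because the distinguished column contributes weight at most $1$ to the difference, so orthogonality is preserved. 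Counting the adjoined characters gives precisely the additional term in \eqref{eqrao2}.

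The main obstacle is the odd case: orthogonality across the enlarged family requires all weight-$(g+1)$ characters to share one common column so that the distinguished entries cancel down to weight at most $1$ in the difference, and the count of such characters is maximised, given the ordering $l_{1}\le\cdots\le l_{v}$, by placing the distinguished column in the $v$-th group, which converts one factor $\binom{k_{v}}{r_{v}}(l_{v}-1)^{r_{v}}$ into $\binom{k_{v}-1}{r_{v}}(l_{v}-1)^{r_{v}+1}$. Once pairwise orthogonality of the constructed family is in place, the desired lower bound on $N$ follows at once from $\dim_{\CC}\CC^{N}=N$.
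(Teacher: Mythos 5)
Your argument is correct, but note that the paper does not actually prove Proposition~\ref{proprao}: it is quoted verbatim (up to notation) from Hedayat, Sloane, and Stufken \cite[Theorem~9.4]{HSS99}, so there is no in-paper proof to match. What you supply is a self-contained character-theoretic proof, and it checks out: identifying rows with elements of $\prod_{i=1}^{v}(\ZZ/l_i\ZZ)^{k_i}$, the strength-$t$ property (after marginalizing from $t$-column subarrays to the at most $t$ support columns of a character) forces every nontrivial character of weight at most $t$ to sum to zero over the rows, which gives pairwise orthogonality of the vectors $\bigl(\chi(\bsy_n)\bigr)_n$ whenever the two characters differ by a character of weight at most $t$; counting the weight-$\le g$ characters yields \eqref{eqrao1}, and adjoining the weight-$(g+1)$ characters that are nontrivial at one fixed column in the $v$-th group yields the extra term in \eqref{eqrao2}. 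Two small remarks: the claim that the distinguished column is best placed in the group with the largest alphabet is not needed for correctness, since the argument with the distinguished column in group $v$ already proves exactly the stated inequality (the ordering $l_1\le\cdots\le l_v$ only ensures this is the strongest of the available bounds); and it is worth saying explicitly that all vectors $v_\chi$ are nonzero, so pairwise orthogonality gives linear independence in $\CC^N$ and hence the bound on $N$. Your method is in fact the same roots-of-unity orthogonality technique the paper itself deploys later, in the proof of Theorem~\ref{thmcalD}, so your proof fits naturally into the paper's toolkit while making the cited Rao bound self-contained.
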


We can apply the Rao bound to the mixed orthogonal arrays obtained
from $(0,m,\bse,s)$-nets. Let us start with the case where the
strength $t$ is even. We recall our assumption $e_1 \le e_2 \le
\cdots \le e_s$.

\begin{thm}\label{thmraoev}
Let $b \ge 2$ and $s \ge 2$ be integers and let $g$ be an integer
with $1 \le g \le s/2$. If there exists a $(0,m,\bse,s)$-net in
base $b$ with $m\ge e_{s-2g+1} + \cdots + e_{s-1}+ e_s$, then
necessarily
$$
\sum_{j=1}^g \sum_{1 \le i_1 < \cdots < i_j \le s} (b^{e_{i_1}}-1)
\cdots (b^{e_{i_j}}-1) \le b^m -1.
$$
\end{thm}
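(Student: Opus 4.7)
The plan is to combine the two tools already assembled in the paper: Theorem~\ref{thmnetoa} converts a $(0,m,\bse,s)$-net into a mixed orthogonal array, and Proposition~\ref{proprao} (the Rao bound) gives a lower bound on the number of rows. First I would invoke Theorem~\ref{thmnetoa} with strength $t = 2g$, which is legitimate because the hypothesis $m \ge e_{s-2g+1} + \cdots + e_s$ matches exactly the condition $m \ge e_{s-t+1} + \cdots + e_s$ needed there. This yields a mixed orthogonal array $\oa\left(b^m, l_1^1 l_2^1 \cdots l_s^1, 2g\right)$ with $l_i = b^{e_i}$, i.e., in the notation of Proposition~\ref{proprao} we have $v = s$, $k_i = 1$ for $1 \le i \le s$, $N = b^m$, and $t = 2g$.

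Next I would apply the even-strength Rao bound \eqref{eqrao1}. The key simplification comes from the fact that $k_i = 1$ for every $i$, so $\binom{k_i}{r_i} = \binom{1}{r_i}$ equals $1$ when $r_i \in \{0,1\}$ and $0$ otherwise. Consequently, the only tuples $(r_1,\ldots,r_s) \in I_j(s)$ that contribute to the inner sum are the $0/1$-vectors with exactly $j$ ones; these are in bijection with $j$-element subsets $\{i_1 < \cdots < i_j\} \subseteq \{1,\ldots,s\}$, and each contributes the product $(l_{i_1}-1)\cdots (l_{i_j}-1) = (b^{e_{i_1}}-1)\cdots (b^{e_{i_j}}-1)$. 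The Rao bound therefore collapses to
$$
b^m \;\ge\; \sum_{j=0}^g \sum_{1 \le i_1 < \cdots < i_j \le s} (b^{e_{i_1}}-1)\cdots (b^{e_{i_j}}-1).
$$

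Finally, I would isolate the $j = 0$ term, which is the empty product equal to $1$, and move it to the left-hand side. This gives exactly the claimed inequality. There is no real obstacle: once the identification $k_i = 1$ is made, the only work is bookkeeping with binomial coefficients to see that the general Rao sum degenerates into a sum over subsets of column indices, and the $j=0$ contribution accounts for the ``$-1$'' on the right-hand side of the theorem.
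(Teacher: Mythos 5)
Your proposal is correct and follows essentially the same route as the paper: invoke Theorem~\ref{thmnetoa} with $t=2g$, apply the even-strength Rao bound~\eqref{eqrao1} with $N=b^m$, $v=s$, $k_i=1$, $l_i=b^{e_i}$, and use $\binom{1}{r}=0$ for $r\ge 2$ to reduce the inner sums to sums over $j$-element subsets, with the $j=0$ term accounting for the $-1$. Nothing is missing.
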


\begin{proof}
We apply the Rao bound in Proposition~\ref{proprao} with $N=b^m$,
strength $t=2g$, $v=s$, $k_i=1$ for $1\le i\le s$, and
$l_i=b^{e_i}$ for $1\le i\le s$. Then from~\eqref{eqrao1} we get
$$
b^m \ge \sum_{j=0}^g \sum_{I_j (s)} {1 \choose r_1}\cdots {1
\choose r_s} (l_1-1)^{r_1}\cdots (l_s-1)^{r_s}.
$$
The contribution to the outer sum over $j$ for $j=0$ is equal to
$1$. For $1 \le j \le g$, we use that ${1 \choose r} =1$ for
$r=0,1$ and ${1 \choose r} =0$ for $r \ge 2$. Hence it suffices to
restrict the sum over $I_j(s)$ to the subset
$$
\Big\{(r_1,\ldots,r_s) \in \{0,1\}^s: \sum_{i=1}^s r_i=j \Big\}.
$$
This yields the desired bound.
\end{proof}

For odd $t$, the Rao bound for mixed orthogonal arrays obtained
from $(0,m,\bse,s)$-nets attains the following form (the proof of
Theorem~\ref{thmraoodd} is similar to that of
Theorem~\ref{thmraoev}).

\begin{thm}\label{thmraoodd}
Let $b \ge 2$ and $s \ge 3$ be integers and let $g$ be an integer
with $1 \le g \le (s-1)/2$. If there exists a $(0,m,\bse,s)$-net
in base $b$ with $m\ge e_{s-2g} + \cdots + e_{s-1}+ e_s$, then
necessarily
$$
\sum_{j=1}^g \sum_{1 \le i_1 < \cdots < i_j \le s} (b^{e_{i_1}}-1)
\cdots (b^{e_{i_j}}-1) +(b^{e_s}-1) \sum_{1 \le i_1 < \cdots < i_g
\le s-1} (b^{e_{i_1}}-1) \cdots (b^{e_{i_g}}-1) \le b^m -1.
$$
\end{thm}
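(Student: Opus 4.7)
The plan is to mirror the proof of Theorem~\ref{thmraoev} step for step, with the single change being the extra term that appears in the odd-strength version of the Rao bound. Concretely, I would first invoke Theorem~\ref{thmnetoa} with $t=2g+1$ (the hypothesis $m \ge e_{s-2g}+\cdots+e_{s-1}+e_s$ is exactly the sum of the $t$ largest $e_i$) to obtain a mixed orthogonal array $\oa(b^m,l_1^1\cdots l_s^1,2g+1)$ with $l_i=b^{e_i}$. Because $e_1\le\cdots\le e_s$, we have $l_1\le\cdots\le l_s$, so Proposition~\ref{proprao} applies in the form~\eqref{eqrao2} with $N=b^m$, $v=s$ and $k_i=1$ for all $i$.

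For the first (double) sum in~\eqref{eqrao2}, the computation is identical to the one carried out in the proof of Theorem~\ref{thmraoev}: the coefficients $\binom{1}{r_i}$ force $r_i\in\{0,1\}$, so that this contribution reduces to
$$1 + \sum_{j=1}^{g}\sum_{1\le i_1<\cdots<i_j\le s}(b^{e_{i_1}}-1)\cdots(b^{e_{i_j}}-1).$$
Nothing new is required here.

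The only genuinely new step is handling the additional sum in~\eqref{eqrao2}. Here the crucial observation is that the factor $\binom{k_v-1}{r_v}=\binom{0}{r_v}$ vanishes unless $r_v=0$. Together with $\sum_{i=1}^{s} r_i = g$, this forces $(r_1,\ldots,r_{s-1})\in\{0,1\}^{s-1}$ with $\sum_{i=1}^{s-1} r_i = g$, while the exponent $r_v+1=1$ pulls out a single factor $(l_s-1)=(b^{e_s}-1)$. The extra contribution thus collapses to exactly
$$(b^{e_s}-1)\sum_{1\le i_1<\cdots<i_g\le s-1}(b^{e_{i_1}}-1)\cdots(b^{e_{i_g}}-1).$$
Adding both contributions, subtracting the $j=0$ term $1$ from $b^m$, and rearranging gives the claimed inequality.

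I do not expect any serious obstacle: the proof is essentially bookkeeping once one has Theorem~\ref{thmnetoa} and Proposition~\ref{proprao} in hand. The only point that deserves care is the identification of the ``distinguished'' index $v$ in~\eqref{eqrao2}, which is fixed by the ordering convention $l_1\le\cdots\le l_v$; this is why the extra factor in the statement is specifically $(b^{e_s}-1)$ (the largest $l_i$) multiplied by a sum over the remaining indices $i_1<\cdots<i_g\le s-1$.
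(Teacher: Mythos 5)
Your proposal is correct and follows exactly the route the paper intends: the paper gives no separate argument for this theorem, stating only that the proof is similar to that of Theorem~\ref{thmraoev}, and your treatment (Theorem~\ref{thmnetoa} with $t=2g+1$, then \eqref{eqrao2} with $N=b^m$, $v=s$, $k_i=1$, $l_i=b^{e_i}$, with $\binom{0}{r_v}$ forcing $r_s=0$ in the extra term) is precisely that argument carried out. No gaps.
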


\begin{rem} \label{remdiff} {\rm
It is a natural question whether the Rao bound yields different
results depending on whether one lumps together identical $e_i$ or
not. The answer to this question is negative. We consider the Rao
bound in two different versions, where we distinguish the
parameters by marking them with superscripts (NL) for the case
where there is ``no lumping'' and (L) where there is ``lumping''.
To be more precise, there are two different situations: (i) the
case where we do not lump together the $e_i$ with the same
value---in this case, we count $v^{(\mathrm{NL})}=s$ values of the
$l_i^{(\mathrm{NL})}$, and $k_1^{(\mathrm{NL})}=\cdots =
k_s^{(\mathrm{NL})} =1$; (ii) the case where we do lump together
the $e_i$ with the same value---in this case, we count
$v=v^{(\mathrm{L})}\le s$ different values of the
$l_i^{(\mathrm{L})}$ and
$k_1^{(\mathrm{L})},\ldots,k_v^{(\mathrm{L})} \ge 1$ with
$\sum_{h=1}^v k_h^{(\mathrm{L})}=s$. We consider for simplicity
the case where $t$ is even and we claim that for every $u \in
\NN$, the right-hand side of the Rao bound~\eqref{eqrao1} has the
same value for the cases {\rm (i)} and {\rm (ii)}. For the proof,
we take real numbers $y_1,\ldots,y_s$, a variable $X$, and the
polynomial given by the product $\prod_{i=1}^s (1+y_iX)$. We write
this polynomial in the form
\begin{equation} \label{eqgf}
\prod_{i=1}^s \Big(\sum_{r=0}^{\infty} {1 \choose r} y_i^r X^r
\Big) = \prod_{h=1}^v (1+b_h X)^{k_h} = \prod_{h=1}^v \Big(
\sum_{r=0}^{\infty} {k_h \choose r} b_h^r X^r \Big).
\end{equation}
Here $k_h$ of the $y_i$ are equal to $b_h$ for $1 \le h \le v$ and
$\sum_{h=1}^v k_h=s$. For $j=0,1,\ldots,g$, we compare the
coefficients of $X^j$ on the leftmost and rightmost side
of~\eqref{eqgf}, then we sum over $j=0,1,\ldots,u$, and finally we
substitute $y_i=l_i^{(\mathrm{NL})}$ for $1 \le i \le s$, thus
proving the claim. }
\end{rem}

\section{Necessary conditions for $(0,\bse,s)$-sequences}\label{secnecseq}

In this section, we derive some necessary conditions for the
existence of $(0,\bse,s)$-sequences. First of all, we note that,
by using~\cite[Proposition~4]{KN14}, we obtain necessary
conditions on the parameters of $(0,\bse,s)$-sequences in base $b$
from the necessary conditions on the parameters of
$(0,m,\bse,s)$-nets in base $b$ stated in Section~\ref{secnecnet}.
However, there are further conditions that we can derive, as we
will now show.

If not stated otherwise, we assume throughout this section that,
without loss of generality, the entries $e_i$ of the $s$-tuple
$\bse \in \NN^s$ are ordered in a nondecreasing manner, i.e., the
first $k_1$ entries of $\bse$ are equal to $1$, the next $k_2$
entries of $\bse$ are equal to $2$, etc., where the $k_r$ are
nonnegative integers.

\begin{thm} \label{thmkr}
For every $(0,\bse,s)$-sequence in base $b$ for which $k_r$ of the
$e_i$ are equal to $r$ for all $r \in \NN$ and some nonnegative
integers $k_r$, we must have $k_r \le b^r$ for all $r \in \NN$.
\end{thm}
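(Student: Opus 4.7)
The plan is to reduce the bound $k_r\le b^r$ to the classical fact that every $(0,s')$-sequence in base $b'$ satisfies $s'\le b'$, applied with $s'=k_r$ and $b'=b^r$. Fix $r\in\NN$ with $k_r\ge 1$ (otherwise the bound is trivial), let $I_r=\{i_1<\cdots<i_{k_r}\}$ be the indices of the coordinates whose $e_i$ equals $r$, and form the projected sequence
\[
\bsy_n := \bigl(x_n^{(i_1)},\ldots,x_n^{(i_{k_r})}\bigr) \in [0,1)^{k_r}, \qquad n\ge 1.
\]
The key claim is that $(\bsy_n)_{n\ge 1}$ is a classical $(0,k_r)$-sequence in base $b':=b^r$, after which the cited classical bound immediately delivers $k_r\le b^r$.

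To verify the claim, fix $m'\ge 1$ and $g\ge 0$ and set $m:=rm'$, so that $(b')^{m'}=b^m$ and truncation to $m'$ digits in base $b'$ coincides with truncation to $m$ digits in base $b$. Definition~\ref{defseq} ensures that $[\bsx_n]_{b,m}$ with $g b^m<n\le (g+1)b^m$ forms a $(0,m,\bse,s)$-net in base $b$, so it remains to show that projecting this net onto the $I_r$-coordinates produces a classical $(0,m',k_r)$-net in base $b'$. Given any elementary interval $J'\subseteq[0,1)^{k_r}$ in base $b'$ with exponents $d'_1,\ldots,d'_{k_r}\ge 0$ and $d'_1+\cdots+d'_{k_r}\le m'$, I lift $J'$ to the elementary interval $J\subseteq[0,1)^s$ in base $b$ whose $i_j$-th factor is the $j$-th factor of $J'$ (so $d_{i_j}=rd'_j$, which is divisible by $e_{i_j}=r$) and whose remaining factors are $[0,1)$. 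Then $\lambda_s(J)=\lambda_{k_r}(J')\ge (b')^{-m'}=b^{-m}$, so the $(0,m,\bse,s)$-net property applies to $J$, and the point count $b^m\lambda_s(J)=(b')^{m'}\lambda_{k_r}(J')$ is exactly what the classical net in base $b'$ demands. Hence $(\bsy_n)_{n\ge 1}$ is a classical $(0,k_r)$-sequence in base $b^r$, and the classical bound $s'\le b'$ (see Niederreiter~\cite{N92}) yields the theorem.

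The main technical point is the compatibility of the divisibility constraint $e_i\mid d_i$ in Definition~\ref{defnet} with the \emph{absence} of such a constraint in the classical net in base $b^r$: the dictionary $d_{i_j}=rd'_j$ is exactly what makes both sides match, since it automatically produces exponents divisible by $r$ on $I_r$ while leaving the $d'_j$ unrestricted in the base-$b^r$ net. Once this base-change is in place, the remainder of the proof is a straightforward matter of bookkeeping and invocation of the classical sequence bound.
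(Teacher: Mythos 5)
Your proof is correct and takes essentially the same route as the paper: project onto the coordinates with $e_i=r$, pass from base $b$ to base $b^r$, and invoke the classical bound $s\le b$ for $(0,s)$-sequences. The only difference is that you verify the projection and base-change net property directly (via the lift $d_{i_j}=rd'_j$), whereas the paper cites this step as a known propagation rule from Kritzer--Niederreiter \cite{KN14}, so the substance is the same.
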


\begin{proof}
For a $k_r>0$, we consider the projection of the given sequence
onto those coordinates that correspond to the $e_i$ with $e_i=r$.
This projection yields a $(0,\bsr,k_r)$-sequence in base $b$ with
$\bsr =(r,\ldots,r) \in \NN^{k_r}$. By
using~\cite[Theorem~4]{KN14}, we obtain a $(0,k_r)$-sequence in
base $b^r$. However, it is well known from the theory of classical
$(u,s)$-sequences that a $(0,k_r)$-sequence in base $b^r$ can
exist only if $k_r\le b^r$ (see~\cite[Corollary~4.36]{DP10}
and~\cite[Corollary~4.24]{N92}). The same principle can be applied
to all $k_r>0$.
\end{proof}

\begin{rem} {\rm
The bound $k_r\le b^r$ in Theorem~\ref{thmkr} is essentially best
possible for prime powers $b$. Indeed, suppose that $b=q$ is a
prime power. We consider a Niederreiter sequence in base $q$ for
which we use all monic irreducible polynomials over the finite
field $\Field_q$ (ordered according to their degrees in a
nondecreasing manner) as the generating polynomials
(see~\cite[Section~8.1]{DP10} for the theory of Niederreiter
sequences). Then by a result of Tezuka~\cite{T13}, for every $s
\in \NN$ the $s$-dimensional version of this sequence is a
$(0,\bse,s)$-sequence in base $q$, where $\bse =(e_1,\ldots,e_s)$
with $e_i$ being the degree of the $i$th generating polynomial for
$1 \le i \le s$. On the other hand, in this case we have for every
$r\in\NN$ that $k_r=N_q (r)$, where $N_q (r)$ denotes the number
of monic irreducible polynomials over $\Field_q$ of degree $r$. It
is well known that $N_q (r)$ has the order of magnitude $q^r/r$
(see~\cite[Theorem~3.25]{LN86}), which differs only by the factor
$r$ from the upper bound $q^r$ on $k_r$. }
\end{rem}

We can extend the principle in Theorem~\ref{thmkr} further.
Suppose that we are given a $(0,\bse,s)$-sequence in base $b$ for
which $k_r\in\NN_0$ of the $e_i$ are equal to $r$ for $r\in\NN$.
Now we consider a collection of positive
$k_{r_1},k_{r_2},\ldots,k_{r_w}$, where the least common multiple
of $r_1,\ldots,r_w$ is denoted by $L$. Then by projecting onto
those coordinates corresponding to the $e_i$ that are equal to one
of the $r_1,\ldots,r_w$, we see again by~\cite[Theorem~4]{KN14}
that this projection is a $(0,k_{r_1}+\cdots +k_{r_w})$-sequence
in base $b^L$. Hence we obtain the necessary condition
$k_{r_1}+\cdots +k_{r_w}\le b^L$. In particular, if
$\mathrm{lcm}(r_1,\ldots,r_w)=r_w$, then we get $k_{r_1}+\cdots
+k_{r_w}\le b^{r_w}$ as a necessary condition. The latter
condition yields a considerable refinement of Theorem~\ref{thmkr}.

\section{Mixed ordered orthogonal arrays} \label{secooa}

We extend our findings regarding the connection between mixed
orthogonal arrays and $(u,m,\bse,s)$-nets further. It is known
that classical $(u,m,s)$-nets are closely related to the concept
of ordered orthogonal arrays, a generalization of orthogonal
arrays (see~\cite[Section~6.2]{DP10}). We now discuss an analogous
relationship between $(u,m,\bse,s)$-nets and ordered orthogonal
arrays over more than one alphabet which we call mixed ordered
orthogonal arrays.

Consider a $(u,m,\bse,s)$-net $\calP$ in base $b$ with $b\ge 2$,
$s\ge 2$, and $\bse=(e_1,\ldots,e_s)\in\NN^s$, where we again
assume without loss of generality that $e_1\le e_2\le \cdots\le
e_s$. We suppose that $m$ is an integer with $m \ge u+e_s$.

Choose positive integers $\beta_i \le \lfloor (m-u)/e_i \rfloor$
for $1 \le i \le s$. Let the points of the net $\calP$ be
$$
\bsx_n=(x_n^{(1)},\ldots,x_n^{(s)})\in [0,1)^s \quad \mbox{for }
n=1,\ldots, b^m,
$$
where
$$
x_n^{(i)}=\sum_{l=1}^m x_{l,n}^{(i)}b^{-l} \quad \mbox{for } 1\le
n\le b^m \mbox{ and } 1\le i\le s,
$$
with all $x_{l,n}^{(i)}\in R(b)$. For $1\le n\le b^m$ and $1\le
i\le s$, $1\le \rho_i\le \beta_i$, define
$$
z_{i,\rho_i} (n):=b^{\rho_i e_i}\sum_{l=(\rho_i-1)e_i +1}^{\rho_i
e_i} x_{l,n}^{(i)}b^{-l} \in R(b^{e_i}).
$$
Arrange these integers into the $b^m \times (\beta_1 +\cdots +
\beta_s)$ array
\begin{multline*}
Z= (z_{i,\rho_i} (n))_{1\le n\le b^m; 1\le i\le s, 1\le \rho_i\le \beta_i}=\\
=\begin{pmatrix}
 z_{1,1}(1) & \ldots & z_{1,\beta_1} (1) & \ldots &\ldots & z_{s,1} (1) &\ldots & z_{s,\beta_s} (1)\\
 z_{1,1}(2) & \ldots & z_{1,\beta_1} (2) & \ldots &\ldots & z_{s,1} (2) &\ldots & z_{s,\beta_s} (2)\\
\vdots & \vdots &\vdots &\vdots &\vdots &\vdots &\vdots &\vdots \\
 z_{1,1}(b^m) & \ldots & z_{1,\beta_1} (b^m) & \ldots &\ldots & z_{s,1} (b^m) &\ldots & z_{s,\beta_s} (b^m)\\
\end{pmatrix}.
\end{multline*}
Now we show the following property of this array, with an obvious notation for the columns of $Z$ (compare with
the proof of Theorem~\ref{thmnetoa}).

\begin{prop} \label{propoc}
Let $\calP$ be a $(u,m,\bse,s)$-net in base $b$ and let $Z$ be the
array obtained from $\calP$ as described above. Choose an integer
$t$ with $1\le t\le s$ and integers $1\le i_1< i_2 < \cdots <
i_t\le s$. Furthermore, choose positive integers
$\kappa_{i_1},\ldots,\kappa_{i_t}$ such that $\kappa_{i_j}\le
\beta_{i_j}$ for $ 1 \le j \le t$ and
$$
\sum_{j=1}^t \kappa_{i_j} e_{i_j}\le m-u.
$$
Then the columns
$$
\bsz_{i_1,1},\ldots,\bsz_{i_1,\kappa_{i_1}},\bsz_{i_2,1},\ldots,\bsz_{i_2,\kappa_{i_2}},\ldots\ldots,\bsz_{i_t,1},\ldots,\bsz_{i_t,\kappa_{i_t}}
$$
of the array $Z$ are orthogonal in the sense that, with
$d=\sum_{j=1}^t \kappa_{i_j}$, every possible $d$-tuple occurs an
equal number of times as a row in the $b^m \times d$ subarray of
$Z$ formed by these columns.
\end{prop}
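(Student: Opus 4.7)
The plan is to run essentially the same argument as in Theorem~\ref{thmnetoa}, but now allowing multiple contiguous ``digit blocks'' of length $e_i$ to be fixed in each coordinate, matching what the $z_{i,\rho_i}(n)$ record. The only real content is identifying the set of $n$ for which a prescribed $d$-tuple appears with an elementary interval of the specific form that Definition~\ref{defnet} permits.

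First, I would fix an arbitrary target row, i.e.\ choose values $h_{i_j,\rho} \in R(b^{e_{i_j}})$ for $1 \le j \le t$ and $1 \le \rho \le \kappa_{i_j}$, and count those indices $n$ with $1 \le n \le b^m$ such that $z_{i_j,\rho}(n) = h_{i_j,\rho}$ for all such $j,\rho$. The key observation is that, for a fixed $i$, prescribing $z_{i,1}(n),\ldots,z_{i,\kappa_i}(n)$ amounts to prescribing the first $\kappa_i e_i$ base-$b$ digits of $x_n^{(i)}$. Hence the condition on $\bsx_n$ is equivalent to $\bsx_n \in J := \prod_{i=1}^s J_i$, where $J_{i_j}=[a_{i_j}b^{-\kappa_{i_j}e_{i_j}},(a_{i_j}+1)b^{-\kappa_{i_j}e_{i_j}})$ for suitable $a_{i_j}$ determined by the $h_{i_j,\rho}$, and $J_i=[0,1)$ for $i \notin\{i_1,\ldots,i_t\}$.

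Next, I would verify that $J$ fits the hypotheses of Definition~\ref{defnet}. The resolution exponents are $d_{i_j}=\kappa_{i_j}e_{i_j}$ (and $d_i=0$ otherwise), so $e_i \mid d_i$ for every $i$. The volume is
\[
\lambda_s(J)=b^{-\sum_{j=1}^t \kappa_{i_j}e_{i_j}} \ge b^{u-m}
\]
by the hypothesis $\sum_{j=1}^t \kappa_{i_j}e_{i_j}\le m-u$. Thus $J$ is an elementary interval of the admissible form, and the defining property of a $(u,m,\bse,s)$-net gives that $J$ contains exactly $b^m\lambda_s(J)=b^{m-\sum_{j=1}^t \kappa_{i_j}e_{i_j}}$ points of $\calP$.

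Finally, since this count depends only on $\sum_{j=1}^t \kappa_{i_j}e_{i_j}$ and not on the particular $h_{i_j,\rho}$, every one of the $b^{\sum_{j=1}^t \kappa_{i_j}e_{i_j}}$ possible $d$-tuples occurs equally often as a row in the selected subarray, which is the desired orthogonality property. The only mild subtlety—and the one I would write out carefully—is the digit-to-interval bookkeeping that turns the prescription of $\kappa_{i_j}$ blocks of $e_{i_j}$ digits into a single elementary interval whose resolution is divisible by $e_{i_j}$; everything else is a direct application of Definition~\ref{defnet}.
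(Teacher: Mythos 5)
Your proposal is correct and follows essentially the same route as the paper's proof: translate the prescription of the digit blocks $z_{i_j,\rho}(n)$ into membership of $\bsx_n$ in an elementary interval $J$ with resolutions $d_{i_j}=\kappa_{i_j}e_{i_j}$ (so $e_i\mid d_i$) and volume $\lambda_s(J)=b^{-\sum_j \kappa_{i_j}e_{i_j}}\ge b^{u-m}$, then apply Definition~\ref{defnet} to get the count $b^m\lambda_s(J)$ independent of the prescribed tuple. No gaps; the digit-to-interval bookkeeping you flag is exactly what the paper writes out explicitly.
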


\begin{proof}
Take any $(h_1 ^{(j)},\ldots,h_{\kappa_{i_j}}^{(j)})\in
(R(b^{e_{i_j}}))^{\kappa_{i_j}}$ for $1\le j\le t$. For $1\le n\le
b^m$ we have
\begin{equation}\label{eqcondorth}
 (z_{i_1,1}(n),\ldots,z_{i_1,\kappa_{i_1}}(n),\ldots,z_{i_t,1}(n),\ldots,z_{i_t,\kappa_{i_t}}(n))
= (h_1 ^{(1)},\ldots,h_{\kappa_{i_1}}^{(1)},\ldots,h_1
^{(t)},\ldots,h_{\kappa_{i_t}}^{(t)})
\end{equation}
if and only if $z_{i_j,\rho_{i_j}}(n)=h_{\rho_{i_j}}^{(j)}$ for
$1\le j\le t$ and $1\le\rho_{i_j}\le \kappa_{i_j}$. The latter
condition means that
$$
b^{\rho_{i_j}e_{i_j}}\sum_{l=(\rho_{i_j}-1)e_{i_j} +1}^{\rho_{i_j}
e_{i_j}} x_{l,n}^{(i_j)}b^{-l}=h_{\rho_{i_j}}^{(j)}
$$
for $1\le j\le t$ and $1\le \rho_{i_j}\le \kappa_{i_j}$, which is
equivalent to
$$\sum_{l=(\rho_{i_j}-1)e_{i_j} +1}^{\rho_{i_j} e_{i_j}} \frac{x_{l,n}^{(i_j)}}{b^l}=\frac{h_{\rho_{i_j}}^{(j)}}{b^{\rho_{i_j}e_{i_j}}}$$
for $1\le j\le t$ and $1\le \rho_{i_j}\le \kappa_{i_j}$. This is,
in turn, equivalent to
$$x_n^{(i_j)}\in \left[\sum_{\rho_{i_j}=1}^{\kappa_{i_j}}\frac{h_{\rho_{i_j}}^{(j)}}{b^{\rho_{i_j}e_{i_j}}},
\sum_{\rho_{i_j}=1}^{\kappa_{i_j}}\frac{h_{\rho_{i_j}}^{(j)}}{b^{\rho_{i_j}e_{i_j}}}+\frac{1}{b^{\kappa_{i_j}e_{i_j}}}\right)=:
\left[\frac{a^{(i_j)}}{b^{\kappa_{i_j}e_{i_j}}},\frac{a^{(i_j)}+1}{b^{\kappa_{i_j}e_{i_j}}}\right),$$
for $1\le j\le t$, for some integers $a^{(i_j)}\in
\{0,1,\ldots,b^{\kappa_{i_j} e_{i_j}}-1\}$. Thus,
\eqref{eqcondorth} is equivalent to
$$\bsx_n\in J:=\prod_{i=1}^s J_i,$$
where
$$J_i=\begin{cases}
       [0,1)&\mbox{if $i\notin\{i_1,\ldots,i_t\}$},\\
       [a^{(i_j)}/b^{\kappa_{i_j} e_{i_j}},(a^{(i_j)}+1)/b^{\kappa_{i_j} e_{i_j}})&\mbox{if $i=i_{j}$ for some $1\le j\le t$.}
      \end{cases}$$
However, the interval $J$ is an elementary interval in base $b$ of
volume
$$b^{-\kappa_{i_1} e_{i_1} - \cdots -\kappa_{i_t} e_{i_t}}\ge b^{u-m}.$$
Hence the definition of a $(u,m,\bse,s)$-net in base $b$ applies.
Therefore the number of integers $n$ with $1 \le n \le b^m$ such
that \eqref{eqcondorth} holds is given by
$$
b^m b^{-\kappa_{i_1} e_{i_1} - \cdots -\kappa_{i_t} e_{i_t}}
$$
for all $(h_1 ^{(1)},\ldots,h_{\kappa_{i_1}}^{(1)},\ldots,h_1
^{(t)},\ldots,h_{\kappa_{i_t}}^{(t)})$, and the desired
orthogonality property is established.
\end{proof}

We call the array
$$
Z= (z_{i,\rho_i} (n))_{1\le n\le b^m; 1\le i\le s, 1\le \rho_i\le
\beta_i}
$$
obtained from a $(u,m,\bse,s)$-net $\calP$ in base $b$ a
\emph{mixed ordered orthogonal array} and denote it by
\begin{equation} \label{eqmooa}
\ooa (b^m,(\beta_1,\ldots,\beta_s),l_1^1\cdots l_s^1,m-u),
\end{equation}
where $l_i=b^{e_i}$ for $1\le i\le s$. We call $m-u$ the
\emph{strength} of $Z$. The reason why we choose the
notation~\eqref{eqmooa} for $Z$ is as follows. If all $e_i=1$,
i.e., if $l_i=b$ for $1\le i\le s$, then a $(u,m,\bse,s)$-net in
base $b$ simplifies to a $(u,m,s)$-net in base $b$. In this case,
we may choose all $\beta_i$ equal to $m-u$, and then we obtain a
classical ordered orthogonal array with $b^m$ rows, $s(m-u)$
columns, and strength $m-u$ from the net. The connection between
$(u,m,s)$-nets and classical ordered orthogonal arrays is well
known (see~\cite[Section~6.2]{DP10} and~\cite{MS99}).

So far, we have shown that a $(u,m,\bse,s)$-net in base $b$ yields
a mixed ordered orthogonal array
$$
\ooa (b^m,(\beta_1,\ldots,\beta_s),l_1^1\cdots l_s^1,m-u)
$$
with $1 \le \beta_i \le \lfloor (m-u)/e_i \rfloor$ and
$l_i=b^{e_i}$ for $1 \le i \le s$. We are now going to prove that
the converse is also true.

Let $e_1,\ldots,e_s\in \NN$. Choose $\beta_i= \lfloor (m-u)/e_i
\rfloor$ for $1\le i\le s$, where $m$ and $u$ are integers with $m
\ge u+e_s$ and $u \ge 0$. Suppose now that $Z$ is a $b^m \times
(\beta_1 +\cdots + \beta_s)$ array with entries $z_{i,\rho_i}
(n)\in R(b^{e_i})$ for $1\le n\le b^m$, $1\le i\le s$, $1\le
\rho_i\le \beta_i$. Suppose furthermore that $Z$ satisfies the
following condition: for every choice of $t\in\{1,\ldots,s\}$ and
$\kappa_{i_1},\ldots,\kappa_{i_t} \in \NN$ such that
$\kappa_{i_j}\le \beta_{i_j}$ for $1 \le j \le t$ and
$$
\sum_{j=1}^t \kappa_{i_j} e_{i_j} \le m-u,
$$
the columns
$$
\bsz_{i_1,1},\ldots,\bsz_{i_1,\kappa_{i_1}},\ldots,\bsz_{i_t,1},\ldots,\bsz_{i_t,\kappa_{i_t}}
$$
of $Z$ have the property that each
$$
(h_1^{(1)},\ldots,h_{\kappa_{i_1}}^{(1)},\ldots,h_1^{(t)},\ldots,h_{\kappa_{i_t}}^{(t)})\in
(R(b^{e_{i_1}}))^{\kappa_{i_1}}\times\cdots\times
(R(b^{e_{i_t}}))^{\kappa_{i_t}}
$$
occurs with frequency
$$b^m b^{-\kappa_{i_1} e_{i_1}-\cdots - \kappa_{i_t} e_{i_t}}.$$

As we will show, $Z$ yields a $(u,m,\bse,s)$-net in base $b$.
Indeed, let $z_{i,\rho_i} (n)\in R(b^{e_i})$, where $1\le n\le
b^m$, $1\le i\le s$, $1\le \rho_i\le \beta_i$, be an entry of $Z$.
Then $z_{i,\rho_i} (n)$ has an expansion in base $b$ of the form
$$
z_{i,\rho_i}(n)=\sum_{l=0}^{e_i-1} x_{\rho_i e_i -l,n}^{(i)} b^l
=b^{\rho_i e_i}\sum_{l=(\rho_i-1)e_i +1}^{\rho_i e_i}
x_{l,n}^{(i)}b^{-l},
$$
where $x_{(\rho_i-1)e_i +1,n}^{(i)},\ldots,x_{\rho_i
e_i,n}^{(i)}\in R(b)$.

Hence from the entries $z_{i,1} (n),\ldots, z_{i,\beta_i} (n)$ we
obtain digits $x_{1,n}^{(i)},\ldots,x_{\beta_i e_i,n}^{(i)}\in
R(b)$ for all $1\le i\le s$ and $1\le n\le b^m$. We use these
digits to define
$$
x_n^{(i)}:=\sum_{l=1}^{\beta_i e_i} x_{l,n}^{(i)}b^{-l}\in [0,1)
$$
for $1\le i\le s$ and $1\le n\le b^m$. Finally, we put
$$
\bsx_n:=(x_n^{(1)},\ldots,x_n^{(s)}) \in [0,1)^s \quad \mbox{for }
1 \le n \le b^m.
$$
We claim that $\bsx_1,\ldots,\bsx_{b^m}$ form a $(u,m,\bse,s)$-net
in base $b$. We denote the point set consisting of the $\bsx_n$ by
$\calP$.

In order to verify the desired net property of $\calP$, let
$J=\prod_{i=1}^s J_i$ be an elementary interval in base $b$ for
which there exist a $t\in\{1,\ldots,s\}$ and indices
$i_1,\ldots,i_t \in \{1,\ldots,s\}$, $1\le i_1 < i_2 <\cdots <
i_t\le s$, such that
$$
J_i=\begin{cases}
       [0,1)&\mbox{if $i\notin\{i_1,\ldots,i_t\}$},\\
       [a^{(i_j)}/b^{\kappa_{i_j} e_{i_j}},(a^{(i_j)}+1)/b^{\kappa_{i_j} e_{i_j}})&\mbox{if $i=i_{j}$ for some $1\le j\le t$,}
      \end{cases}
$$
where the $a^{(i_j)}$ are integers satisfying $0\le a^{(i_j)} <
b^{\kappa_{i_j} e_{i_j}}$ for all $1\le j\le t$ and where the
$\kappa_{i_1},\ldots,\kappa_{i_t}$ are positive integers with
$$
\sum_{j=1}^t \kappa_{i_j} e_{i_j}\le m-u,
$$
that is, $\lambda_s (J) \ge b^{u-m}$. Note that the condition on
the $\kappa_{i_j}$ implies that no $\kappa_{i_j}$ exceeds
$\beta_{i_j}$. We need to show that $J$ contains exactly
$$
b^m b^{-\kappa_{i_1} e_{i_1}-\cdots - \kappa_{i_t} e_{i_t}}
$$
points of $\calP$. Suppose that $n$ is such that $\bsx_n\in J$,
i.e.,
$$x_{n}^{(i_j)}\in \left[\frac{a^{(i_j)}}{b^{\kappa_{i_j} e_{i_j}}},\frac{a^{(i_j)}+1}{b^{\kappa_{i_j} e_{i_j}}}\right)$$
for $1\le j\le t$. Since $0\le a^{(i_j)} < b^{\kappa_{i_j}
e_{i_j}}$, we can represent $a^{(i_j)}/b^{\kappa_{i_j} e_{i_j}}$
as
$$\frac{a^{(i_j)}}{b^{\kappa_{i_j} e_{i_j}}}=\sum_{\rho_{i_j}=1}^{\kappa_{i_j}}\frac{h_{\rho_{i_j}}^{(j)}}{b^{\rho_{i_j}e_{i_j}}}$$
for some $h_1^{(j)},\ldots,h_{\kappa_{i_j}}^{(j)}\in
R(b^{e_{i_j}})$. Then $\bsx_n\in J$ is equivalent to
$$x_n^{(i_j)}\in \left[\sum_{\rho_{i_j}=1}^{\kappa_{i_j}}\frac{h_{\rho_{i_j}}^{(j)}}{b^{\rho_{i_j}e_{i_j}}},
\sum_{\rho_{i_j}=1}^{\kappa_{i_j}}\frac{h_{\rho_{i_j}}^{(j)}}{b^{\rho_{i_j}e_{i_j}}}+\frac{1}{b^{\kappa_{i_j}e_{i_j}}}\right)$$
for all $j\in\{1,\ldots,t\}$. This, however, is equivalent to
$$\sum_{l=(\rho_{i_j}-1)e_{i_j} +1}^{\rho_{i_j} e_{i_j}} \frac{x_{l,n}^{(i_j)}}{b^l}=\frac{h_{\rho_{i_j}}^{(j)}}{b^{\rho_{i_j}e_{i_j}}}$$
for $1\le j\le t$ and $1\le \rho_{i_j}\le \kappa_{i_j}$, which
means that
$$
z_{i_j,\rho_{i_j}}(n)=b^{\rho_{i_j}e_{i_j}}\sum_{l=(\rho_{i_j}-1)e_{i_j}
+1}^{\rho_{i_j} e_{i_j}}
x_{l,n}^{(i_j)}b^{-l}=h_{\rho_{i_j}}^{(j)}
$$
for $1\le j\le t$ and $1\le \rho_{i_j}\le \kappa_{i_j}$. By the
orthogonality properties of the columns of $Z$ that we assumed
above, the latter condition is fulfilled for exactly
$$
b^m b^{-\kappa_{i_1} e_{i_1}-\cdots - \kappa_{i_t} e_{i_t}}
$$
indices $n$. This shows that $\calP$ is indeed a
$(u,m,\bse,s)$-net in base $b$. In summary, we have shown the
following result.

\begin{thm} \label{thmequiv}
The existence of a $(u,m,\bse,s)$-net in base $b$ is equivalent to
the existence of a mixed ordered orthogonal array
$$\ooa (b^m,(\beta_1,\ldots,\beta_s),l_1^1\cdots l_s^1,m-u)
$$
with $l_i=b^{e_i}$ and $\beta_i = \lfloor (m-u)/e_i \rfloor$ for
$1 \le i \le s$.
\end{thm}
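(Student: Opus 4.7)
The plan is to prove the equivalence by establishing the two implications separately, relying heavily on Proposition~\ref{propoc} for the forward direction and on the constructive discussion preceding the theorem statement for the converse. Since the paper has already developed both pieces almost to completion, the proof essentially amounts to packaging them correctly and verifying that the parameter choice $\beta_i=\lfloor(m-u)/e_i\rfloor$ is admissible in Proposition~\ref{propoc}.

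For the forward direction, I would start from a $(u,m,\bse,s)$-net $\calP$ in base $b$ and set $\beta_i := \lfloor(m-u)/e_i\rfloor$ for $1 \le i \le s$. The hypothesis $m \ge u + e_s \ge u + e_i$ guarantees $\beta_i \ge 1$, so the construction of the array $Z$ from $\calP$ via the $b$-adic digits $x_{l,n}^{(i)}$ is well defined. Then I would invoke Proposition~\ref{propoc}: for any $t \in \{1,\ldots,s\}$, any choice of indices $i_1 < \cdots < i_t$, and any positive $\kappa_{i_j} \le \beta_{i_j}$ satisfying $\sum_{j=1}^t \kappa_{i_j} e_{i_j} \le m-u$, the selected columns of $Z$ are orthogonal with the correct frequencies. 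This is precisely the definition of an $\ooa(b^m,(\beta_1,\ldots,\beta_s),l_1^1 \cdots l_s^1,m-u)$ with $l_i = b^{e_i}$.

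For the converse, I would start from such a mixed ordered orthogonal array $Z$ and, for each fixed row index $n$ and each $i$, read the entries $z_{i,1}(n),\ldots,z_{i,\beta_i}(n) \in R(b^{e_i})$ as blocks of digits in base $b$, thereby defining digits $x_{l,n}^{(i)} \in R(b)$ for $1 \le l \le \beta_i e_i$. Setting $x_n^{(i)} := \sum_{l=1}^{\beta_i e_i} x_{l,n}^{(i)} b^{-l}$ and $\bsx_n := (x_n^{(1)},\ldots,x_n^{(s)})$ gives a candidate point set $\calP$. To verify the net property, I would take an arbitrary elementary interval $J$ of volume at least $b^{u-m}$ with $e_i \mid d_i$, and observe that such a $J$ must have the restricted form displayed before the theorem, with $\kappa_{i_j} = d_{i_j}/e_{i_j}$. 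The key calculation is that $\bsx_n \in J$ translates, via the digit expansions, into $\kappa_{i_j}$-fold conditions $z_{i_j,\rho_{i_j}}(n) = h_{\rho_{i_j}}^{(j)}$ for appropriate target digits. Applying the orthogonality of the corresponding columns then yields exactly $b^m \lambda_s(J)$ valid indices $n$.

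The main obstacle, which is really the only nontrivial point, is ensuring that the conditions on $J$ in the definition of a $(u,m,\bse,s)$-net (namely $e_i \mid d_i$ and $\lambda_s(J) \ge b^{u-m}$) match exactly the orthogonality hypotheses available on $Z$ (namely $\kappa_{i_j} \le \beta_{i_j}$ and $\sum_j \kappa_{i_j} e_{i_j} \le m - u$). The divisibility $e_i \mid d_i$ lets me write $d_{i_j} = \kappa_{i_j} e_{i_j}$, the volume bound gives $\sum_j \kappa_{i_j} e_{i_j} \le m-u$, and this inequality together with $\kappa_{i_j} \ge 1$ automatically forces $\kappa_{i_j} \le \lfloor (m-u)/e_{i_j} \rfloor = \beta_{i_j}$, so no additional assumption is needed. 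Once this matching is cleanly recorded, the theorem follows by directly combining the two arguments.
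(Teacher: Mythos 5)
Your proposal is correct and follows essentially the same route as the paper: the forward implication is exactly an application of Proposition~\ref{propoc} with $\beta_i=\lfloor(m-u)/e_i\rfloor$, and the converse is the paper's own reconstruction of the digits $x_{l,n}^{(i)}$ from the array entries, including the observation (made explicitly in the paper as well) that $\sum_j\kappa_{i_j}e_{i_j}\le m-u$ automatically forces $\kappa_{i_j}\le\beta_{i_j}$. No discrepancies to report.
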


\begin{rem} \label{remgff} {\rm
Theorem~\ref{thmequiv} can be used for the construction of mixed ordered orthogonal arrays, by starting from a known construction of a
$(u,m,\bse,s)$-net. A powerful construction of such nets was presented in \cite[Section~5]{KN14} and it employs global function fields,
that is, algebraic function fields of one variable over a finite field. We use the standard terminology for global function fields in the
monographs~\cite{NX09} and~\cite{St09}. Let $F$ be a global function field with full constant field $\FF_q$, where $q$ is an arbitrary
prime power, and let $g(F)$ be the genus of $F$. For an integer $s \ge 2$, let $P_1,\ldots,P_s$ be $s$ distinct places of $F$. Let $e_i$
be the degree of $P_i$ for $1 \le i \le s$ and put $\bse =(e_1,\ldots,e_s) \in \NN^s$. Then for every integer $m \ge \max(1,g(F))$ which 
is a multiple of ${\rm lcm}(e_1,\ldots,e_s)$, there is a construction of a $(u,m,\bse,s)$-net in base $q$ with $u=g(F)$. The condition on $m$
can be relaxed in many cases (see \cite[Remark~3]{KN14}). Mixed ordered orthogonal arrays corresponding to these nets can be read off
from Theorem~\ref{thmequiv}. }
\end{rem}

\section{A bound for mixed ordered orthogonal arrays} \label{secboundooa}

Throughout this section, let $Z$ be a mixed ordered orthogonal
array~\eqref{eqmooa} obtained from a $(u,m,\bse,s)$-net in base
$b$ according to Proposition~\ref{propoc}. We denote by $C$ the
collection of all columns of $Z$ and, for $1\le i\le s$, we define
$C_i$ to be the collection of the columns
$\bsz_{i,1},\ldots,\bsz_{i,\beta_i}$ of $Z$. We generalize the argumentation in~\cite{MS99},
which corresponds to the special case $e_i=1$ for $1 \le i \le s$.

Suppose that $D:=(D_1,\ldots,D_s)$ is an $s$-tuple of functions,
where
$$D_i: C_i\To R(b^{e_i}) \qquad \mbox{for } 1 \le i \le s.$$
For two functions $D_i^{(1)}, D_i^{(2)}$, both mapping from $C_i$
to $R (b^{e_i})$, we define $D_i^{(1)}- D_i^{(2)}$ by
$$(D_i^{(1)}-D_i^{(2)})(\bsz):=D_i^{(1)} (\bsz) - D_i^{(2)}(\bsz) \pmod {b^{e_i}}.$$

We now define two quantities that are associated with an $s$-tuple
$D=(D_1,\ldots,D_s)$ as given above. First, we define the profile
of $D=(D_1,\ldots,D_s)$ by
$$\prof (D) = \prof ((D_1,\ldots,D_s)):=(d_1,\ldots,d_s),$$
where
$$
d_i=\begin{cases}
       0 & \mbox{if $D_i (\bsz_{i,\rho_i})=0$ for $1\le \rho_i\le \beta_i$,}\\
       \max\{\rho_i: D_i (\bsz_{i,\rho_i})\neq 0\} & \mbox{otherwise,}
      \end{cases}
$$
for $1\le i\le s$. Note that $0\le d_i\le \beta_i$ for $1\le i\le
s$. Furthermore, we define the height of $D=(D_1,\ldots,D_s)$ as
$$
\hei (D)=\hei ((D_1,\ldots,D_s)):= \sum_{i=1}^s d_i e_i.
$$
Moreover, note that if $Z$ is a mixed ordered orthogonal
array~\eqref{eqmooa} obtained from a $(u,m,\bse,s)$-net in base
$b$ according to Proposition~\ref{propoc} and if
$$
\hei ((D_1,\ldots,D_s))=\sum_{i=1}^s d_i e_i \le m-u,
$$
then the columns
$$
\bsz_{1,1},\ldots,\bsz_{1,\delta_1},\ldots,\bsz_{s,1},\ldots,\bsz_{s,\delta_s}
$$
are orthogonal for all $\delta_j\le d_j$, $1\le j\le s$, by
Proposition~\ref{propoc}.

We now show the following theorem which is the ``mixed"
analog of~\cite[Lemma~3.1]{MS99}. This theorem gives a necessary condition on 
the parameters of a mixed ordered orthogonal array.

\begin{thm} \label{thmcalD}
Let $Z$ be a mixed ordered orthogonal array~\eqref{eqmooa}
obtained from a $(u,m,\bse,s)$-net in base $b$. Let $\mathcal{D}$
be a set of functions defined on $C$ such that
$$
\hei
((D_1^{(1)},\ldots,D_s^{(1)})-(D_1^{(2)},\ldots,D_s^{(2)}))\le m-u
$$
for all $(D_1^{(1)},\ldots,D_s^{(1)}),
(D_1^{(2)},\ldots,D_s^{(2)})\in\mathcal{D}$. Then $b^m\ge
\abs{\mathcal{D}}$.
\end{thm}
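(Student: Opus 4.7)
The plan is to prove Theorem \ref{thmcalD} via a character-theoretic argument, the mixed-alphabet analogue of what is done for classical ordered orthogonal arrays. The idea is to attach to each $D\in\mathcal{D}$ a vector $f_D\in\CC^{b^m}$ in such a way that the $f_D$ are pairwise orthogonal and nonzero; linear independence in the $b^m$-dimensional space $\CC^{b^m}$ then immediately yields $|\mathcal{D}|\le b^m$. Concretely, I would set $\omega_r:=\exp(2\pi\icomp/b^r)$ and, for $D=(D_1,\ldots,D_s)\in\mathcal{D}$, define
$$
f_D(n) := \prod_{i=1}^s \prod_{\rho_i=1}^{\beta_i} \omega_{e_i}^{D_i(\bsz_{i,\rho_i})\, z_{i,\rho_i}(n)}, \qquad 1 \le n \le b^m.
$$
Since $|f_D(n)|=1$ at every $n$, each $f_D$ has squared norm $b^m$, so nonvanishing is automatic, and the task reduces to checking $\langle f_{D^{(1)}},f_{D^{(2)}}\rangle=0$ for distinct $D^{(1)},D^{(2)}\in\mathcal{D}$.

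To this end I would introduce the differences $\Delta_i:=D_i^{(1)}-D_i^{(2)}\pmod{b^{e_i}}$, let $(d_1,\ldots,d_s):=\prof((\Delta_1,\ldots,\Delta_s))$, and observe that the hypothesis on $\mathcal{D}$ gives $\sum_i d_ie_i\le m-u$, while $D^{(1)}\ne D^{(2)}$ forces some $d_i>0$. Taking the Hermitian inner product and using that the entries of $\Delta$ above profile level $d_i$ are zero, one obtains
$$
\langle f_{D^{(1)}},f_{D^{(2)}}\rangle = \sum_{n=1}^{b^m} \prod_{i=1}^s \prod_{\rho_i=1}^{d_i} \omega_{e_i}^{\Delta_i(\bsz_{i,\rho_i})\, z_{i,\rho_i}(n)}.
$$
At this point I would invoke Proposition \ref{propoc} with $\kappa_{i_j}=d_{i_j}$ ranging over the indices $i$ with $d_i>0$: the bound $\sum_i d_ie_i\le m-u$ is exactly what the proposition demands, and it delivers uniformity of the joint distribution of the tuples $(z_{i,\rho_i}(n))_{\rho_i\le d_i}$ on $\prod_i R(b^{e_i})^{d_i}$. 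The sum above then factors as a positive constant times $\prod_{i,\rho_i}\sum_{a\in R(b^{e_i})}\omega_{e_i}^{\Delta_i(\bsz_{i,\rho_i})\,a}$, and each inner sum is $b^{e_i}$ or $0$ according as $\Delta_i(\bsz_{i,\rho_i})$ is zero or nonzero in $R(b^{e_i})$. By the very definition of the profile, $\Delta_i(\bsz_{i,d_i})\ne 0$ for every $i$ with $d_i>0$, so at least one factor vanishes and the whole inner product is zero.

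The main obstacle I foresee is purely bookkeeping for the mixed alphabets: one must use the per-coordinate root of unity $\omega_{e_i}$ rather than a single $\omega_b$, and one must restrict Proposition \ref{propoc} to exactly those indices $i$ with $d_i>0$ so that the strength inequality $\sum_i d_ie_i\le m-u$ actually meets the proposition's hypothesis without slack. Once that is arranged, the substance of the argument is supplied by the elementary character-theoretic fact that a nontrivial character of $\ZZ/b^{e_i}\ZZ$ sums to zero over $R(b^{e_i})$, which is exactly what the profile definition guarantees at the top index $\rho_i=d_i$. An alternative packaging via a linear map from the span of $\{f_D\}$ into $\CC^{b^m}$ would work equally well, but the direct inner-product formulation seems cleanest.
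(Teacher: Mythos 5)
Your proposal is correct and follows essentially the same route as the paper: the vector $f_D$ you define is exactly the paper's $v_D$ (componentwise products of root-of-unity encodings of the columns weighted by $D_i$), and the orthogonality of distinct $f_{D^{(1)}},f_{D^{(2)}}$ is obtained, as in the paper, by applying Proposition~\ref{propoc} to the profile of the difference and factoring the inner product into character sums over $R(b^{e_i})$. Your observation that only the top profile index $\rho_i=d_i$ is guaranteed to carry a nonzero $\Delta_i$-value, so that one vanishing factor suffices, is in fact a slightly more careful phrasing of the paper's concluding step.
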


\begin{proof}
Let $\omega_j:=e^{2\pi\icomp / b^{e_j}} \in \CC$ and let
$1,\omega_j,\omega_j^2,\ldots,\omega_j^{b^{e_j}-1}$ be the
$b^{e_j}$-th roots of unity for $1\le j\le s$. Suppose now that
$Z$ is as in the theorem. Let $C$ and $C_i$, $1 \le i \le s$, be as
in the beginning of this section. We can identify a column
$\bsc\in C_i$ with a vector $v_{\bsc}$ over the alphabet
$1,\omega_i,\ldots,\omega_i^{b^{e_i}-1}$, that is,
$v_{\bsc}\in\CC^{b^m}$.

Let $D=(D_1,\ldots,D_s)\in\mathcal{D}$, where $D_i:C_i\To R
(b^{e_i})$ for $1 \le i \le s$. For every $\bsc\in C$, we can
identify a unique $i\in\{1,\ldots,s\}$ such that $\bsc\in C_i$,
and we take $D_i (\bsc)$ copies of the corresponding
$v_{\bsc}\in\CC^{b^m}$. We repeat this procedure for each $\bsc\in
C$ and we obtain
$$\sum_{i=1}^s \sum_{\rho_i=1}^{\beta_i} D_i (\bsz_{i,\rho_i}) $$
vectors in $\CC^{b^m}$. We then take the componentwise product of
these vectors and thereby obtain a vector $v_D\in\CC^{b^m}$
determined by $D$. This vector is of the form
$$
\begin{pmatrix}
   \prod_{i=1}^s \prod_{\rho_i =1}^{\beta_i}\omega_i^{k_{i,\rho_i}^{(1)} D_i (\bsz_{i,\rho_i})}\\
   \vdots\\
   \prod_{i=1}^s \prod_{\rho_i =1}^{\beta_i}\omega_i^{k_{i,\rho_i}^{(b^m)} D_i (\bsz_{i,\rho_i})}
  \end{pmatrix}
$$
with the $k_{i,\rho_i}^{(n)}$ being elements of $R(b^{e_i})$ for
$1\le i\le s$ and $1\le \rho_i\le \beta_i$. For two distinct
elements $D^{(1)}=(D_1^{(1)},\ldots, D_s^{(1)})$ and
$D^{(2)}=(D_1^{(2)},\ldots,D_s^{(2)})$ of $\mathcal{D}$, we have
by assumption,
$$\hei (D^{(1)}-D^{(2)})\le m-u.$$
For short, we write $E:=D^{(1)}-D^{(2)}$, with
$E_i=D_i^{(1)}-D_i^{(2)}$ for $1\le i\le s$. Hence we know that
$\hei (E)\le m-u$. Thus, there exist integers $d_1,\ldots, d_s$
with $0\le d_i\le \beta_i$ for $1 \le i \le s$ such that
$\sum_{i=1}^s d_i e_i\le m-u$ and $E_i (\bsz_{i,\rho_i})=0$ for
$\rho_i> d_i$. Formulating this property of $E$ slightly
differently, we can say that there exist positive integers
$\kappa_{i_1},\ldots,\kappa_{i_t}$ with $1\le i_1 < \cdots < i_t
\le s$ and $\kappa_{i_j} \le \beta_{i_j}$ for $1 \le j \le t$ such
that
$$\sum_{j=1}^t \kappa_{i_j} e_{i_j} \le m-u$$
as well as $E_i(\bsz_{i,\rho_i})>0$ if and only if $i=i_j$ for
some $j$ and $\rho_{i_j}\le \kappa_{i_j}$. By
Proposition~\ref{propoc}, the columns
$$\bsz_{i_1,1},\ldots,\bsz_{i_1,\kappa_{i_1}},\ldots,\bsz_{i_t,1},\ldots,\bsz_{i_t,\kappa_{i_t}}$$
are orthogonal, and so also the $v_{\bsc}$ corresponding to these
columns of $Z$ are orthogonal and each possible combination of
symbols occurs with frequency $f=b^m b^{-\kappa_{i_1}
e_{i_1}-\cdots - \kappa_{i_t} e_{i_t}}$. Let now $\langle
\cdot,\cdot\rangle$ denote the usual Hermitian inner product in
$\CC^{b^m}$. We study the expression
\begin{eqnarray*}
\langle v_{D^{(1)}},v_{D^{(2)}}\rangle &=& \sum_{n=1}^{b^m} \prod_{i=1}^s \prod_{\rho_i =1}^{\beta_i} \omega_i^{k_{i,\rho_i}^{(n)} E_i (\bsz_{i,\rho_i})}\\
&=&\sum_{n=1}^{b^m} \prod_{j=1}^t \prod_{\rho_{i_j}
=1}^{\kappa_{i_j}} \omega_{i_j}^{k_{i_j,\rho_{i_j}}^{(n)} E_{i_j}
(\bsz_{i_j,\rho_{i_j}})}.
\end{eqnarray*}
Due to the above-mentioned orthogonality properties of the
$v_{\bsc}$, we can write
\begin{eqnarray*}
\langle v_{D^{(1)}},v_{D^{(2)}}\rangle&=& f
\sum_{k_{i_1,1}=0}^{b^{e_{i_1}}-1}\cdots
\sum_{k_{i_1,\kappa_{i_1}}=0}^{b^{e_{i_1}}-1} \cdots
\sum_{k_{i_t,1}=0}^{b^{e_{i_t}}-1}\cdots
\sum_{k_{i_t,\kappa_{i_t}}=0}^{b^{e_{i_t}}-1}
\prod_{j=1}^t \prod_{\rho_{i_j} =1}^{\kappa_{i_j}} \omega_{i_j}^{k_{i_j,\rho_{i_j}} E_{i_j} (\bsz_{i_j,\rho_{i_j}})}\\
&=& f \prod_{j=1}^t \prod_{\rho_{i_j} =1}^{\kappa_{i_j}}
\sum_{k_{i_j,\rho_{i_j}}=0}^{b^{e_{i_j}}-1}
\omega_{i_j}^{k_{i_j,\rho_{i_j}} E_{i_j} (\bsz_{i_j,\rho_{i_j}})}.
\end{eqnarray*}
However, as $E_{i_j} (\bsz_{i_j,\rho_{i_j}}) \not\equiv 0 \ ({\rm
mod} \ b^{e_{i_j}})$ in the last sum, it is clear that
$$ \sum_{k_{i_j,\rho_{i_j}}=0}^{b^{e_{i_j}}-1}\omega_{i_j}^{k_{i_j,\rho_{i_j}} E_{i_j} (\bsz_{i_j,\rho_{i_j}})}=
\sum_{k_{i_j,\rho_{i_j}}=0}^{b^{e_{i_j}}-1}\left(\omega_{i_j}^{E_{i_j}
(\bsz_{i_j,\rho_{i_j}})}\right)^{k_{i_j,\rho_{i_j}}}=0.$$ We
therefore see that the collection of the $v_D$ with
$D\in\mathcal{D}$ is orthogonal with respect to
$\langle\cdot,\cdot\rangle$, and therefore $\{v_D:
D\in\mathcal{D}\}$ is a linearly independent set of vectors in
$\CC^{b^m}$. This implies the desired result.
\end{proof}

\begin{rem}\rm
A natural question is whether one can derive effective concrete bounds on the $u$-value of $(u,m,\bse,s)$-nets in base $b$ from Theorem~\ref{thmcalD}, 
as it was done analogously for ordinary $(u,m,s)$-nets in \cite{MS99}. However, this question appears to be very challenging, and is therefore left open 
for future research.
\end{rem}

\begin{small}
\noindent \textbf{Authors' addresses:}\\ \\
\noindent Peter Kritzer\\ 
Department of Financial Mathematics and Applied Number Theory,\\ 
Johannes Kepler University Linz,\\ 
Altenbergerstr. 69, A-4040 Linz, AUSTRIA.\\
\texttt{peter.kritzer@jku.at}\\ 

\noindent Harald Niederreiter\\
Johann Radon Institute for Computational and Applied Mathematics,\\
Austrian Academy of Sciences,\\
Altenbergerstr. 69, A-4040 Linz, AUSTRIA,\\
and\\
Department of Mathematics,\\ 
University of Salzburg,\\ 
Hellbrunnerstr. 34, A-5020 Salzburg, AUSTRIA,\\
\texttt{ghnied@gmail.com}
\end{small}


\begin{thebibliography}{00}

\bibitem{DP10}
J.~Dick, F.~Pillichshammer. \textit{Digital Nets and Sequences:
Discrepancy Theory and Quasi-Monte Carlo Integration}. Cambridge
University Press, Cambridge, 2010.

\bibitem{HSS99}
A.S.~Hedayat, N.J.A.~Sloane, J.~Stufken. \textit{Orthogonal
Arrays: Theory and Applications}. Springer, New York, 1999.

\bibitem{H15}
R.~Hofer. Generalized Hofer-Niederreiter sequences and their
discrepancy from a $(\boldsymbol{U},\bse,s)$-point of view. J.
Complexity 31, 260--276, 2015.

\bibitem{HN13}
R.~Hofer, H.~Niederreiter. A construction of $(t,s)$-sequences
with finite-row generating matrices using global function fields.
Finite Fields Appl. 21, 97--110, 2013.

\bibitem{KN14}
P.~Kritzer, H.~Niederreiter. Propagation rules for
$(u,m,\bse,s)$-nets and $(u,\bse,s)$-sequences. J. Complexity 31,
457--473, 2015.

\bibitem{L96}
K.M.~Lawrence. A combinatorial characterization of $(t,m,s)$-nets
in base $b$. J. Combinatorial Designs 4, 275--293, 1996.

\bibitem{LM98}
C.F.~Laywine, G.L.~Mullen. \textit{Discrete Mathematics Using Latin Squares}.
Wiley, New York, 1998.

\bibitem{LP14}
G.~Leobacher, F.~Pillichshammer. \textit{Introduction to
Quasi-Monte Carlo Integration and Applications}. Birkh\"auser and
Springer International, Heidelberg, 2014.

\bibitem{LN86}
R.~Lidl, H.~Niederreiter. \textit{Introduction to Finite Fields
and Their Applications}, revised edition. Cambridge University
Press, Cambridge, 1994.

\bibitem{MS99}
W.J.~Martin, D.R.~Stinson. A generalized Rao bound for ordered
orthogonal arrays and $(t,m,s)$-nets. Canad. Math. Bull. 42,
359--370, 1999.

\bibitem{MS96}
G.L.~Mullen, W.Ch.~Schmid. An equivalence between $(t,m,s)$-nets
and strongly orthogonal hypercubes. J. Combinatorial Theory Ser. A
76, 164--174, 1996.

\bibitem{N87}
H.~Niederreiter. Point sets and sequences with small discrepancy.
Monatsh. Math. 104, 273--337, 1987.

\bibitem{N92}
H.~Niederreiter. \textit{Random Number Generation and Quasi-Monte
Carlo Methods}. SIAM, Philadelphia, 1992.

\bibitem{N13}
H.~Niederreiter. $(t,m,s)$-nets and $(t,s)$-sequences.
\textit{Handbook of Finite Fields} (G.L.~Mullen, D.~Panario,
eds.), pp. 619--630, CRC Press, Boca Raton, FL, 2013.

\bibitem{NX01}
H.~Niederreiter, C.P.~Xing. \textit{Rational Points on Curves over
Finite Fields: Theory and Applications}. Cambridge University
Press, Cambridge, 2001.

\bibitem{NX09}
H.~Niederreiter, C.P.~Xing. \textit{Algebraic Geometry in Coding Theory and
Cryptography}. Princeton University Press, Princeton, NJ, 2009.

\bibitem{NY13}
H.~Niederreiter, A.S.J.~Yeo. Halton-type sequences from global
function fields. Science China Math. 56, 1467--1476, 2013.

\bibitem{SS09}
R.~Sch\"urer, W.Ch.~Schmid. MinT -- new features and new results.
\textit{Monte Carlo and Quasi-Monte Carlo Methods 2008}
(P.~L'Ecuyer, A.B.~Owen, eds.), pp. 171--189, Springer, Berlin,
2009.

\bibitem{St09}
H.~Stichtenoth. \textit{Algebraic Function Fields and Codes}, second edition.
Springer, Berlin, 2009.

\bibitem{T13}
S.~Tezuka. On the discrepancy of generalized Niederreiter
sequences. J.~Complexity 29, 240--247, 2013.
\end{thebibliography}
\end{document}